\documentclass[12pt]{article}

\textwidth 17cm

\textheight 22cm

\oddsidemargin -4mm

\topmargin -8mm

\newtheorem{definition}{Definition}

\newtheorem{proposition}{Proposition}

\newtheorem{conjecture}{Conjecture}

\usepackage{epsfig}
\usepackage{url}
\usepackage{amssymb}
\usepackage{amsmath}
\usepackage{amsfonts}

\def\Dbar{\leavevmode\lower.6ex\hbox to 0pt{\hskip-.23ex \accent"16\hss}D}

\def\bZ{{\mbox{\bf Z}}}

\newcommand{\nc}{\newcommand}
\nc{\cP}{{\cal P}}

\begin{document}

{\bf\LARGE
\begin{center}
A class of cyclic $(v;k_1,k_2,k_3;\lambda)$ difference
families with $v \equiv 3 \pmod{4}$ a prime
\end{center}
}

{\Large
\begin{center}
Dragomir {\v{Z}}. {\Dbar}okovi{\'c}\footnote{University of Waterloo,
Department of Pure Mathematics, Waterloo, Ontario, N2L 3G1, Canada
e-mail: \url{djokovic@math.uwaterloo.ca}}, Ilias S.
Kotsireas\footnote{Wilfrid Laurier University, Department of Physics
\& Computer Science, Waterloo, Ontario, N2L 3C5, Canada, e-mail:
\url{ikotsire@wlu.ca}}
\end{center}
}

\begin{abstract}
We construct several new cyclic $(v;k_1,k_2,k_3;\lambda)$ difference families, with $v\equiv3\pmod{4}$ a prime and
$\lambda=k_1+k_2+k_3-(3v-1)/4$. Such families can be used in conjunction with the well-known Paley-Todd difference sets to construct skew-Hadamard matrices of order $4v$.
Our main result is that we have constructed for the first time
the examples of skew Hadamard matrices of orders
$4\cdot239=956$ and $4\cdot331=1324$.
\end{abstract}

\section{Introduction}

Let $\bZ_v = \{ 0, 1, \ldots, v-1\}$ be the ring of integers modulo an integer $v>1$. Let $k_1,\ldots,k_t$ be nonnegative
integers, $\lambda$ an integer such that
\begin{equation} \label{par-Lambda}
\lambda (v - 1) = \sum_{i=1}^t k_i (k_i - 1),
\end{equation}
and let $X_1,\ldots,X_t$ be subsets of $\bZ_v$ such that
$|X_i|=k_i$, $i\in\{1,2,\ldots,t\}$.

\begin{definition}
We say that $X_1,\ldots,X_t$ are {\em supplementary difference
sets (SDS)} or a {\em difference family} with parameters
$(v;k_1,\ldots,k_t;\lambda)$, if for every
$c\in\bZ_v\setminus\{0\}$ there are exactly $\lambda$ ordered
triples
$(a,b,i)\in\bZ_v\times\bZ_v\times\{1,2,\ldots,t\}$
such that $\{a,b\}\subseteq X_i$ and $a-b \equiv c \pmod{v}$.
\end{definition}

In the context of an SDS $(v;k_1,\ldots,k_t;\lambda)$, it is
convenient to introduce an additional parameter, {\em order}
$n$, defined by
\begin{equation} \label{par-n}
n = k_1 + \cdots + k_t - \lambda.
\end{equation}
We refer to the sets $X_i$ as the {\em base blocks} of the SDS. In the case $t=1$ the SDSs are called {\em cyclic difference sets}, see \cite{Baumert:1971}, \cite{Handbook:DifferenceSets}, \cite{Stinson:book:2004}.
In some cases we have to replace some of the base blocks
with their complements in $\bZ_v$. This gives a new SDS
with different parameter $\lambda$. However, the parameter
$n$ remains the same. Thus we can easily compute the new
value of $\lambda$.

There are exactly twelve integers $n<500$ for which the existence question for Hadamard matrices of order $4n$ is still undecided  (see \cite{DGK:JCD:2014}):
$$
167,179,223,283,311, 347,359,419,443,479, 487,491.
$$
All of these integers are primes congruent to 3 modulo 4.
This motivates the search for special methods to
construct Hadamard matrices for these orders.

One of the powerful methods is based on the Goethals-Seidel array (GS-array), see e.g. \cite{SY:1992}:
$$
\left[ \begin{array}{cccc}
Z_0 & Z_1R & Z_2R & Z_3R \\
-Z_1R & Z_0 & -Z_3^T R & Z_2^T R \\
-Z_2R & Z_3^T R & Z_0 & -Z_1^T R \\
-Z_3R & -Z_2^T R & Z_1^T R & Z_0
\end{array} \right],
$$
where the $Z_i$ are suitable $\{1,-1\}$-circulant matrices and
$R$ is the square matrix with ones on the back-diagonal and zeros
elsewhere.

If $X\subseteq\bZ_v$, then the {\em associated sequence} of $X$
is the $\{\pm1\}$-sequence $\xi_0,\xi_1,\ldots,\xi_{v-1}$
with $\xi_i=-1$ if $i\in X$ and $\xi_i=1$ otherwise.

Usually, to construct a Hadamard matrix of order $4v$ via the
GS-array, we require an SDS, say $X_0,X_1,X_2,X_3$, with parameters $(v;k_0,k_1,k_2,k_3;\lambda_0)$ and order $n_0=v$,
i.e., such that $\lambda_0=k_0+k_1+k_2+k_3-v$. Given such
SDS, let $A_0,A_1,A_2,A_3$ be the sequences associated to the
base blocks $X_0,X_1,X_2,X_3$, and let $Z_0,Z_1,Z_2,Z_3$
be the circulant matrices with the first rows
$A_0,A_1,A_2,A_3$, respectively.
By plugging these circulants into the GS-array, we obtain a
Hadamard matrix. This will be a skew-Hadamard matrix if the block
$X_0$ is of skew type. This means that $X_0$ has the property
that $i\in X_0$ if and only if $-i\notin X_0$. In particular,
$v$ must be odd, $0\notin X_0$ and $|X_0|=(v-1)/2$.

In this paper we are interested in the special case of this construction where $v \equiv 3 \pmod{4}$ is a prime and
$k_0=(v-1)/2$.
Moreover, except for the last section, we shall assume that we have choosen $X_0$ to be the Paley-Todd difference set, see
\cite[Theorem 27.4, p. 234]{vanLint:Wilson:1992}. Then the base blocks $X_1,X_2,X_3$ form an SDS with the parameter set
$(v;k_1,k_2,k_3;\lambda)$ and order $n=(3v-1)/4$. Thus we have
\begin{equation} \label{par-lambda}
\lambda = k_1 + k_2 + k_3 - \frac{3v-1}{4}.
\end{equation}
Without any loss of generality, we may assume that
\begin{equation}
v/2>k_1\ge k_2\ge k_3\ge 0.
\label{nejednakosti}
\end{equation}

We denote by $\cP$ the collection of parameter sets
$(v;k_1,k_2,k_3;\lambda)$, with
$v \equiv 3 \pmod{4}$ a prime, and satisfying the
conditions (\ref{par-Lambda}), (\ref{par-lambda}) and
(\ref{nejednakosti}).

\begin{conjecture} \label{hip} For each parameter
set in $\cP$, there exists at least one SDS.
\end{conjecture}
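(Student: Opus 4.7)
My plan is to approach Conjecture~\ref{hip} by combining a cyclotomic construction with a constrained computer search, since a uniform closed-form construction for all parameter sets in $\cP$ is not to be expected. First I would observe that, for each fixed prime $v \equiv 3 \pmod 4$, the parameter set in $\cP$ is finite: the inequality $k_1 < v/2$ together with (\ref{par-Lambda}) and (\ref{par-lambda}) leaves only a bounded list of triples $(k_1,k_2,k_3)$ to treat. So the problem reduces to establishing existence for each such triple individually, and the conjecture becomes a statement that a systematic procedure always succeeds.

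The main tool I would use is cyclotomy. Since $v$ is prime, $\bZ_v^*$ decomposes into cyclotomic classes $C_0,C_1,\ldots,C_{e-1}$ of order $e\mid v-1$; the Paley--Todd set itself is a union of the $e=2$ classes. For each candidate parameter triple I would first look for $X_1,X_2,X_3$ each expressible as a union of cyclotomic classes (or translates thereof) of some order $e$ dividing $v-1$. The difference multiplicities then reduce to linear combinations of cyclotomic numbers, and the condition (\ref{par-Lambda}) becomes a system of linear Diophantine equations in the indicator variables of the classes; solving this system (and checking the prescribed sizes $k_i$) is a finite task. This subsumes many of the ``algebraic'' constructions in the literature and would cover a large fraction of parameter sets, especially those for which some $k_i$ is close to a class size $(v-1)/e$.

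For parameter triples not reachable by pure cyclotomy, I would fall back on an orbit-restricted computer search. The key reduction is to require each $X_i$ to be a union of orbits of a multiplier subgroup $H \le \bZ_v^*$ (typically $H=\{\pm 1\}$ or a larger subgroup determined by the factorization of $v-1$); this cuts the search space from $2^v$ down to $2^{v/|H|}$, and the multiplicity condition (\ref{par-Lambda}) can be verified efficiently via the $\psd$/$\dft$ test using the associated $\pm 1$-sequences. Running the search for each triple in turn should settle the conjecture for all $v$ up to a moderate bound, and in particular yields the new constructions at $v=239$ and $v=331$ announced in the abstract.

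The hard part, of course, is the infinite tail: no step of this plan proves the conjecture uniformly in $v$. To push past pure computation I would try to establish an asymptotic existence result via a probabilistic/character-sum argument, showing that for each fixed ratio $(k_1,k_2,k_3)/v$ the expected number of SDS built from random translates of cyclotomic classes grows with $v$, with error terms controlled by Weil-type bounds on the relevant character sums. This is where I expect the real obstacle to lie: the parameter $\lambda=k_1+k_2+k_3-(3v-1)/4$ is exactly the ``balanced'' value that makes the character-sum main term vanish, so one must extract existence from lower-order fluctuations, which tends to require genuinely new input beyond what I have sketched here.
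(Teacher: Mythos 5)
The statement you are asked to prove is labelled a \emph{conjecture} in the paper, and the paper does not prove it: it only assembles computational evidence (explicit SDSs for all but eight of the $36$ parameter sets with $v\le 131$, plus three new SDSs for $v=239$), and Table~1 still contains unresolved entries such as $(71;34,32,28;41)$, $(79;38,35,32;46)$, both sets for $v=83$, $(107;49,48,46;63)$, two of the three sets for $v=127$, and $(131;64,58,57;81)$. Your proposal is likewise not a proof, and to your credit you say so explicitly: the cyclotomic reduction and the orbit-restricted search only settle finitely many $v$ at a time, and your closing paragraph concedes that the infinite tail requires ``genuinely new input.'' So the honest verdict is that there is a genuine gap --- the same gap the paper leaves open --- and no amount of refinement of the search strategy closes it.

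On the finite part, your methodology coincides almost exactly with the paper's: they too restrict each base block to be a union of orbits of a subgroup $H\le\bZ_v^*$ of small odd prime order $q\mid v-1$, as in~(\ref{orbite}). But note a concrete limitation the paper points out that your plan inherits: the orbit method is only applicable when $q$ divides $k_i$ or $v-k_i$ for every $i$, which is why the paper could handle $v=239$ but not, for example, $v=107$. So even the ``run the search for each triple in turn'' step of your plan is not guaranteed to terminate with a positive answer for a given $v$; one may be forced back to an unrestricted $2^v$-size search, which is infeasible already at these sizes. Your observation that $\lambda=k_1+k_2+k_3-(3v-1)/4$ sits at the balanced value where the character-sum main term vanishes is a fair diagnosis of why an asymptotic existence argument is hard, but it is a diagnosis, not a remedy. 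The correct reading of the situation is that the statement remains open, both in the paper and in your proposal.
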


We claim that the family $\cP$ is infinite. This follows
from the following stronger result.

\begin{proposition}
\label{beskonacna}
For each prime number $v \equiv 3 \pmod{4}$, there exist
nonnegative integers $k_1,k_2,k_3$ and $\lambda$ such that
$(v;k_1,k_2,k_3;\lambda)\in\cP$.
\end{proposition}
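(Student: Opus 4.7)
The plan is to reduce the existence question for $(k_1,k_2,k_3,\lambda)$ satisfying (\ref{par-Lambda}), (\ref{par-lambda}) and (\ref{nejednakosti}) to a classical problem about sums of three squares. First I eliminate $\lambda$ by substituting (\ref{par-lambda}) into (\ref{par-Lambda}), obtaining
\begin{equation*}
k_1^2 + k_2^2 + k_3^2 \;=\; v(k_1+k_2+k_3) - \frac{(v-1)(3v-1)}{4}.
\end{equation*}
Multiplying by $4$ and completing the square in each $k_i$ (using $4k_i^2 - 4vk_i = (2k_i-v)^2 - v^2$) transforms this into the equivalent identity
\begin{equation*}
(v-2k_1)^2 + (v-2k_2)^2 + (v-2k_3)^2 \;=\; 4v-1.
\end{equation*}

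Now set $a_i := v-2k_i$. Because $v$ is odd, each $a_i$ is odd; the constraints $0\le k_i<v/2$ read $1\le a_i\le v$, and the ordering $k_1\ge k_2\ge k_3$ corresponds to $a_1\le a_2\le a_3$. Thus the proposition reduces to showing that $4v-1$ can be expressed as
\begin{equation*}
a_1^2+a_2^2+a_3^2 \;=\; 4v-1, \qquad 1\le a_1\le a_2\le a_3\le v,\ a_i\ \text{odd}.
\end{equation*}

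Because $v\equiv 3\pmod 4$, a direct calculation gives $4v-1\equiv 3\pmod 8$, so $4v-1$ is not of the form $4^{a}(8b+7)$. By the Gauss--Legendre three-square theorem, $4v-1$ is therefore a sum of three integer squares. Reducing modulo $8$ and using that integer squares lie in $\{0,1,4\}\pmod 8$, the only way for three such residues to sum to $3\pmod 8$ is $1+1+1$; this forces each of the three integers to be odd (hence nonzero), and after replacing them by their absolute values we may assume they are positive. The upper bound $a_i\le v$ is trivial: $a_i^2\le 4v-1<v^2$ whenever $v\ge 5$, and the only remaining case $v=3$ admits the representation $11=3^2+1^2+1^2$ with $\max a_i=3=v$. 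Sorting the $a_i$ in increasing order and taking $k_i:=(v-a_i)/2$, then defining $\lambda$ by (\ref{par-lambda}), produces a parameter set in $\cP$.

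The only non-elementary ingredient in this outline is the Gauss--Legendre three-square theorem, which is invoked as a black box; everything else is an algebraic substitution and a finite mod-$8$ case analysis. I do not anticipate a serious obstacle, but one technicality worth flagging is the boundary case $v=3$, where $\sqrt{4v-1}$ exceeds $v$ only marginally and requires the explicit check above.
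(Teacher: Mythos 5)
Your proposal is correct and follows essentially the same route as the paper: both reduce the problem to writing $4v-1\equiv 3\pmod 8$ as a sum of three odd squares $s_1^2+s_2^2+s_3^2$ and then set $k_i=(v-s_i)/2$. The only cosmetic differences are that you derive the odd-three-squares fact from the Gauss--Legendre three-square theorem via a mod-$8$ analysis while the paper cites Gauss's triangular-number theorem, and you write out the algebraic verification (and the $v=3$ boundary case) that the paper dismisses as ``easy to verify.''
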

\begin{proof}
Recall the famous result of Gauss that every positive
integer is a sum of at most three triangular numbers.
This fact is equivalent to the assertion that every
positive integer congruent to 3 modulo 8 is a sum of
three odd squares (see \cite{Duke}).

As $(3;1,1,0;0)\in\cP$, we may assume that $v>3$.
Since $v$ is odd, we have $4v-1\equiv 3 \pmod{8}$. Hence,
there exist positive odd integers $s_1,s_2,s_3$
such that $\sum s_i^2=4v-1$. Note that
$s_i < \sqrt{4v-1} < v$, and so the integers
$k_i=(v-s_i)/2$ are positive and less than $v/2$.
By permuting the $k_i$, we may assume that $k_1\ge k_2\ge k_3$.
It is now easy to verify that $(v;k_1,k_2,k_3;\lambda)\in\cP$,
where $\lambda=\sum k_i -(3v-1)/4$.
\end{proof}

In section \ref{family} (and in the appendix) we list all
parameter sets in $\cP$ with $v\le131$ and for each of them we provide at least one SDS whenever such SDS is known. Although there remain several undecided cases, these computational
results suggest that Conjecture \ref{hip} is true.

In section \ref{SkewHM239} we give three non-equivalent SDSs with the parameter set $(239;119,112,106;158)$ and $n=179$. The importance of these three SDSs is that each of them, in conjunction with the Paley-Todd difference set (the set of nonzero squares in the finite field $\bZ_{239}$), gives a skew-Hadamard matrix of size $4\cdot239$.

Finally, for the parameter set $(331;165,155,155,155;299)$,
which is not in $\cP$, we construct in section \ref{SkewHM331}
six non-equivalent SDSs in which the block of size $165$ is
of skew type. Thus we obtain six skew-Hadamard matrices of
order $4\cdot331=1324$.

To the best of our knowledge, skew-Hadamard matrices of the two orders mentioned above were not known previously.
See \cite[Table 1]{KouStyl:2008}) for the list of 98 odd
positive integers $m<500$ for which no skew-Hadamard matrix of
order $4m$ was known at that time. Subsequently, skew-Hadamard
matrices of order $4m$ were constructed for $m=109,145,247$
\cite{Djokovic:JCD:2008} and $m=213$ \cite{DGK:JCD:2014}.
To update the above mentioned list, one should delete from it
the six integers $109,145,213,239,247,331$. The first 22 entries
(those with $4m<1000$) in the updated list are:
\begin{eqnarray*}
&& 69,89,101,107,119, 149,153,167,177,179, 191,193, \\
&& 201,205,209, 223,225,229,233,235, 245,249.
\end{eqnarray*}

Only five of these values, namely $v=107,167,179,191,223$,
admit the parameter sets in $\cP$.

\section{Difference families with parameters in $\cP$ and
$v\le131$} \label{family}

In this section we provide evidence for Conjecture \ref{hip}.
In the appendix we list all parameter sets
$(v;k_1,k_2,k_3;\lambda)$ with $v\le 131$ which belong to
the family $\cP$.
For each of them we either give a reference to papers where
the SDSs have been constructed or give explicit examples of
SDSs that we have constructed.
If no SDSs are known, we indicate by a question
mark that the existence question remains undecided.

In some cases we use the known D-optimal SDSs to construct
the desired difference family with three base blocks.
This works only when $k_1=(v-1)/2$. Assume that there exists
an SDS $(X_2,X_3)$ with the parameter set
$(v;k_2,k_3;\lambda')$, where $\lambda'=k_2+k_3-(v-1)/2$.
If $X_1$ is the Paley-Todd diference set in $\bZ_v$, then
$(X_1,X_2,X_3)$ is an SDS with parameters
$(v;k_1,k_2,k_3;\lambda)\in\cP$.
In that case we say that the SDS $(X_1,X_2,X_3)$ is constructed
from the D-optimal SDS $(X_2,X_3)$. A list of known D-optimal
SDSs $(v;k_2,k_3;\lambda')$ with $v<100$ and examples of the
corresponding DO-designs are given in \cite{DK:D-optimal:2015}.

The multiplicative group $\bZ_v^*$ of the prime field $\bZ_v$,
acts on $\bZ_v$ by multiplication modulo $v$. In many cases we construct the base blocks $X_1,X_2,X_3$ of an SDS as the union of orbits of a nontrivial subgroup $H$ of $\bZ_v^*$.
Let $H=\langle h \rangle$, i.e., $h$ is a generator of $H$.
The orbit containing the integer $j\in\bZ_v$ is written as
$H\cdot j$. In particular, when $j=0$ we obtain the trivial orbit
$H\cdot 0=\{0\}$.
In all of our computations, the order of $H$ is a small odd prime
divisor $q$ of $v-1$. So, an $H$-orbit has size 1 or $q$.
If it is of size 1, say $\{x\}$, then $(h-1)x=0$.
As $v$ is a prime, it follows that $x=0$.
Hence, all nontrivial orbits of $H$ are of size $q$.

We write the blocks $X_1,X_2,X_3$ as
\begin{equation} \label{orbite}
X_1=\bigcup_{j\in J} H\cdot j, \quad
X_2=\bigcup_{k\in K} H\cdot k, \quad
X_3=\bigcup_{l\in L} H\cdot l.
\end{equation}
Here, the set $J$ is a set of representatives of the $H$-orbits
comprising $X_1$, etc.
Thus, in order to specify the blocks $X_1,X_2,X_3$, it suffices
to specify the subgroup $H$ and the corresponding sets of
representatives $J,K,L\subseteq\bZ_v$.

In order to be able to use this method it is necessary that
$H$ be chosen so that $q$ divides either $k_i$ or $v-k_i$
for each $i$. This explains why we were able to find an SDS
for $v=239$ but not for some smaller values of $v$,
say for $v=107$.

Table 1 summarizes what we presently know about the
existence of SDSs with parameters in $\cP$ for $v\le131$.
The entry ``yes'' means that the SDS exists, while ``?'' means
that the existence question remains undecided.

$$ \begin{array}{cccccc|c|cccccc} \hline
v  &  k_1  &  k_2  &  k_3  &  \lambda  &  \mbox{yes/?} & & v  &  k_1  &  k_2  &  k_3  &  \lambda  &  \mbox{yes/?} \\
\hline \hline
3 &  1 &  1 &  0 &  0 & \mbox{yes} & & 71 &  34 &  32 &  28 &  41 & \mbox{?} \\ \hline
7 &  3 &  3 &  1 &  2 & \mbox{yes} & & 71 &  31 &  31 &  30 &  39 & \mbox{yes} \\ \hline
7 &  2 &  2 &  2 &  1 & \mbox{yes} & & 79 &  39 &  37 &  31 &  48 & \mbox{yes} \\ \hline
11 &  4 &  4 &  3 &  3 & \mbox{yes} & & 79 &  38 &  35 &  32 &  46 & \mbox{?} \\ \hline
19 &  9 &  7 &  6 &  8 & \mbox{yes} & & 79 &  37 &  34 &  33 &  45 & \mbox{yes} \\ \hline
19 &  7 &  7 &  7 &  7 & \mbox{yes} & & 83 &  39 &  37 &  34 &  48 & \mbox{?} \\ \hline
23 &  11 &  10 &  7 &  11 & \mbox{yes} & & 83 &  37 &  37 &  35 &  47 & \mbox{?} \\ \hline
31 &  15 &  15 &  10 &  17 & \mbox{yes} & & 103 &  51 &  48 &  42 &  64 & \mbox{yes} \\ \hline
31 &  13 &  12 &  12 &  14 & \mbox{yes} & & 103 &  51 &  46 &  43 &  63 & \mbox{yes} \\ \hline
43 &  21 &  21 &  15 &  25 & \mbox{yes} & & 103 &  49 &  49 &  42 &  63 & \mbox{yes} \\ \hline
43 &  21 &  18 &  16 &  23 & \mbox{yes} & & 103 &  46 &  46 &  45 &  60 & \mbox{yes} \\ \hline
43 &  20 &  17 &  17 &  22 & \mbox{yes} & & 107 &  49 &  48 &  46 &  63 & \mbox{?} \\ \hline
43 &  19 &  19 &  16 &  22 & \mbox{yes} & & 127 &  61 &  58 &  54 &  78 & \mbox{?} \\ \hline
47 &  22 &  22 &  17 &  26 & \mbox{yes} & & 127 &  60 &  57 &  56 &  77 & \mbox{?} \\ \hline
47 &  21 &  19 &  19 &  24 & \mbox{yes} & & 127 &  57 &  57 &  57 &  76 & \mbox{yes} \\ \hline
59 &  29 &  28 &  22 &  35 & \mbox{yes} & & 131 &  65 &  61 &  55 &  83 & \mbox{yes} \\ \hline
67 &  31 &  28 &  28 &  37 & \mbox{yes} & & 131 &  64 &  58 &  57 &  81 & \mbox{?} \\ \hline
67 &  30 &  30 &  27 &  37 & \mbox{yes} & & 131 &  61 &  61 &  56 &  80 & \mbox{yes} \\ \hline
\end{array} $$
\begin{center} Table 1
\end{center}

\section{ Three skew-Hadamard matrices of order $4\cdot239$
\label{SkewHM239} }

According to the list \cite[Table 1]{KouStyl:2008} no skew-Hadamard matrix of order $4\cdot239$ is known. We construct such a matrix by using the method presented in the Introduction.

For this construction we need a difference family with three base blocks $X_1,X_2,X_3$ having the parameters
$(239;119,112,106;158)$. We have constructed three such families by using the subgroup $H=\{1,10,24,44,98,100,201\}$ of
$\bZ_{239}^*$. In each case, the base blocks $X_1,X_2,X_3$ are unions of orbits of $H$ as in (\ref{orbite}).
We have constructed such three non-equivalent SDSs. Their sets $J,K,L$ are:
\begin{eqnarray*}
J_1 &=& \{1,3,5,6,15, 17,19,28,34,38, 39,57,58,63,85, 95,107\}, \\
K_1 &=& \{1,3,4,5,15, 16,17,18,19,21, 23,29,35,45,58, 63\},  \\
L_1 &=& \{0, 1,4,6,7,8,  13,16,18,34,35, 45,47,58,63,95\};
\end{eqnarray*}

\begin{eqnarray*}
J_2 &=& \{1,3,9,13,14, 15,16,18,23,28, 29,38,42,45,58, 85,107\}, \\
K_2 &=& \{4,6,7,8,9,   13,14,17,28,39, 45,47,57,58,95, 107 \},  \\
L_2 &=& \{0, 1,4,5,6,9,   13,18,21,34,35, 39,42,57,85,95  \};
\end{eqnarray*}

\begin{eqnarray*}
J_3 &=& \{1,2,3,4,5,    6,7,8,14,16,    18,19,34,39,57, 58,95\}, \\
K_3 &=& \{5,9,14,15,17, 18,21,23,29,34, 35,39,45,47,57, 58\},  \\
L_3 &=& \{0, 1,6,7,8,13,  15,16,17,23,28, 35,57,63,85,107\},
\end{eqnarray*}
respectively.

\section{ Six skew-Hadamard matrices of order $4\cdot331$
\label{SkewHM331} }

According to the list \cite[Table 1]{KouStyl:2008} no
skew-Hadamard matrix of order $4\cdot331$ is known.
Although there are six parameters sets in $\cP$ with
$v=331$, we did not succeed to construct an SDS for
any of them. However we were able to construct six
difference families with base blocks $X_0,X_1,X_2,X_3$ having the parameter set $(331;165,155,155,155;299)$.
In all six cases $X_0$ is of skew type (but not a difference set). Hence, by plugging the corresponding four circulant matrices into the GS-array, we obtain a skew-Hadamard matrix of order $4\cdot331=1324$.
We have verified that the six difference families constructed
above are pairwise non-equivalent.

We have constructed these 6 families by using the subgroup
$$
H=\{1,74,80,85,111,120,167,180,270,274,293\}
$$
of $\bZ_{331}^*$ of order 11.
All four base blocks are unions of orbits of $H$ as in
(\ref{orbite}).

For the first two families we use the sets of orbit representatives $M,J,K,L$ and $M,J',K,L$, where
\begin{eqnarray*}
M &=& \{2,4,8,10,14, 16,20,28,38,31, 32,37,56,62,64\}, \\
J &=& \{0, 1,2,5,10,13, 19,22,28,31,37, 53,56,64,101\}, \\
J'&=& \{0,7,8,13,14, 19,28,31,37,38, 49,53,62,73,76\}, \\
K &=& \{0, 1,5,7,8,10, 11,16,31,38,41, 56,62,64,73\},  \\
L &=& \{0, 1,7,8,10,13, 14,19,22,32,37, 44,62,73,76\}.
\end{eqnarray*}

For the remaining four families we use the sets
$M,J,K,L$;~ $M,J,K,L'$;~ $M,J,K',L$,~ and $M,J,K',L'$, where
\begin{eqnarray*}
M &=& \{4,13,14,16,22, 32,37,38,41,49, 53,56,62,64,76\}, \\
J &=& \{0, 2,10,11,20,22, 31,32,37,38,53, 62,64,76,101\}, \\
K &=& \{0, 2,10,11,20,22,31,32,37,38,53,62,64,76,101\}, \\
K'&=& \{0, 1,2,4,5,7, 8,13,14,19,28, 41,49,76,88\}, \\
L &=& \{0, 4,8,11,13,16, 19,20,22,31,37, 38,49,64,101\}, \\
L'&=& \{0, 1,4,7,8,10,11,13,14,19,22,31,37,41,44\}.
\end{eqnarray*}
(In the last four cases the block $X_1$ is symmetric.)

\section{Conclusions}

One of the standard methods of construction of
Hadamard matrices of order $4v$ uses the Goethals--Seidel array.
This method requires supplementary difference sets (SDSs)
with 4 basic blocks and special parameter sets, namely
$(v;k_0,k_1,k_2,k_3;\lambda_0)$ with order $n_0=v$.

In this paper we investigate a special subclass
of such SDSs which are obtained from simpler SDSs
having only 3 base blocks. Their parameter sets have the form
$(v;k_1,k_2,k_3;\lambda)$ with $v\equiv 3 \pmod{4}$
a prime, and have the order $n=(3v-1)/4$.
We assume that they are normalized in the sense that
$v/2>k_1\ge k_2\ge k_3$.
We denote by $\cP$ this infinite family of parameter sets
(see Proposition \ref{beskonacna}).
If we add the Paley-Todd diference set in $\bZ_v$, we
obtain an SDS with four base blocks, whose order $n_0$
is equal to $v$.
Consequently, the Hadamard matrix constructed by using this
new SDS is a skew-Hadamard matrix.

We have conjectered that, for each parameter set in $\cP$,
there exists at least one SDS.

To provide evidence to this conjecture, we performed
many computations to construct such SDSs.
Table 1 gives a summary of the previously known and the
new SDSs belonging to $\cP$ in the range $v\le131$.
Among the 36 parameter sets in this range, the SDSs are
not known in 8 cases.

Moreover, we have constructed three non-equivalent SDSs for the
parameter set \\
$(239;119,112,106;158)$, which belongs to $\cP$
but has $v=239>131$. Each of them gives a
skew-Hadamard matrix of order $956$. These are the first
examples of such matrices.

As an aside, we also record six SDSs with four base blocks
which give skew-Hadamard matrices of order $1324$. Again,
these are the first examples of such matrices.

\section{Acknowledgements}
We thank the referees for their valuable comments and
suggestions.
The authors wish to acknowledge generous support by NSERC.
Computations were performed on the SOSCIP Consortium's Blue Gene/Q, computing platform.  SOSCIP is funded by the Federal Economic Development Agency of Southern Ontario, IBM Canada Ltd., Ontario Centres of Excellence, Mitacs and 14 academic member institutions.

\section{Appendix: SDSs for parameter sets in $\cP$ with
$v\le 131$
}

\noindent $(3;1,1,0;0),  \quad  n = 2$ \\
$\{0\}, \{0\}, \emptyset$ \\

\noindent $(7;3,3,1;2),  \quad  n = 5$ \\
$\{0,1,3\}, \{0,1,3\}, \{0\}$ \\

\noindent $(7;2,2,2;1),  \quad  n = 5$ \\
$\{0,1\}, \{0,2\}, \{0,3\}$ \\

\noindent $(11;4,4,3;3), \quad   n = 8$ \\
$\{0,1,3,5\}, \{0,1,4,5\}, \{0,2,5\}$ \\

\noindent $(19;9,7,6;8),  \quad  n = 14$ \\
$\{0,1,2,3,5,7,12,13,16\}, \{0,1,2,4,5,10,13\}, \{0,1,4,6,8,13\}$ \\

\noindent $(19;7,7,7;7),  \quad  n = 14$ \\
$\{0,1,3,4,7,8,13\}, \{0,1,2,5,8,10,13\}, \{0,1,2,5,7,9,11\}$ \\

\noindent $(23;11,10,7;11),  \quad  n = 17$ \\
$\{0,1,2,3,5,7,8,11,12,15,17\}, \{0,1,2,3,6,8,10,11,14,18\}, \{0,1,2,5,7,11,14\}$ \\

\noindent $(31;15,15,10;17),  \quad   n = 23$ \\
$\{0,1,3,4,6,7,8,9,13,15,17,18,19,24,27\},
\{0,1,2,3,4,5,8,10,11,14,16,17,21,23,25\}$, \\
$\{0,1,4,5,6,9,11,16,19,23\}$ \\

\noindent $(31;13,12,12;14),  \quad   n = 23$ \\
$\{0,1,2,3,5,6,8,13,15,16,19,22,25\},
\{0,1,2,5,7,9,13,14,15,17,22,27\}$, \\
$\{0,1,3,4,5,9,10,11,13,16,20,24\}$ \\

\noindent $(43;21,21,15;25), \quad    n = 32$ \\
$\{0,1,2,4,8,9,10,11,12,14,15,16,19,21,24,27,28,30,32,33,37\}$, \\
$\{0,1,3,4,5,7,8,9,12,13,16,17,19,22,24,26,28,33,34,36,37\}$, \\
$\{0,1,2,4,6,7,10,12,13,15,20,26,27,28,33\}$ \\

\noindent $(43;21,18,16;23), \quad    n = 32$ \\
$\{0,1,2,4,8,9,10,11,12,14,15,16,19,21,24,27,28,30,32,33,37\}$, \\
$\{0,1,2,5,6,7,10,12,14,16,17,20,21,23,24,30,31,32\}$, \\
$\{0,1,2,5,6,8,10,14,17,20,22,23,25,27,33,36\}$ \\

\noindent $(43;20,17,17;22),  \quad   n = 32$ \\
$\{0,2,4,5,6,8,9,11,12,14,17,19,21,22,25,26,31,32,33,35\}$, \\
$\{0,1,2,4,7,9,11,12,15,16,19,20,24,25,26,32,34\}$, \\
$\{0,1,2,3,4,7,8,9,14,17,19,20,23,25,29,32,36\}$ \\

\noindent $(43;19,19,16;22),  \quad   n = 32$ \\
$\{0,1,2,3,4,6,8,11,12,14,16,19,24,25,29,31,34,35,40\}$, \\
$\{0,1,2,3,4,5,7,10,11,13,15,18,19,20,24,25,27,31,32\}$, \\
$\{0,1,2,3,7,9,10,12,15,28,29,32,36\}$ \\

\noindent $(47;22,22,17;26),  \quad  n = 35$ \\
$\{0,1,2,3,4,6,7,8,9,11,16,17,18,19,22,25,27,31,36,37,39,43\}$, \\
$\{0,1,3,4,5,6,7,8,10,14,15,18,19,22,25,27,28,30,31,34,38,40\}$, \\
$\{0,1,2,5,6,7,11,13,15,18,22,24,29,32,34,39,40\}$ \\
Two other solutions were constructed in
\cite[Proposition 2.1]{Djokovic:skewHM:188:388:2008}. \\

\noindent $(47;21,19,19;24),  \quad n = 35$ \\
$\{0,1,2,3,4,6,7,10,11,12,14,15,17,21,23,24,30,31,36,39,41\}$, \\
$\{0,1,3,4,6,7,12,14,16,17,21,22,26,28,29,33,35,37,41\}$, \\
$\{0,1,2,3,4,5,7,9,15,18,19,20,23,25,26,30,34,39,42\}$ \\
Two other solutions were constructed in
\cite[Proposition 2.2]{Djokovic:skewHM:188:388:2008}. \\

\noindent $(59;29,28,22;35),  \quad  n = 44$  \\
\noindent Three non-equivalent SDSs with these parameters can be
constructed from the D-optimal SDS with parameters
$(59;28,22;21)$ found in \cite{FKS:2004} and the two additional D-optimal SDSs found in \cite{DK:D-optimal:2015}.
We have constructed very recently yet another D-optimal SDS with
the same parameters, not equivalent to any of the three SDSs
mentioned above. Its two base blocks are
\begin{eqnarray*}
X_2 &=& \{ 0,2,3,4,5, 6,7,8,9,13,  15,17,19,20,23,
           25,28,29,31,32, 33,36,40,43,44, 45,50,54 \}, \\
X_3 &=& \{ 0,1,2,4,7, 8,9,14,15,17, 18,21,24,26,28,
           33,34,36,39,44, 45,54  \}.
\end{eqnarray*}  \\

In the following two cases we use the subgroup $H=\{1,29,37\}$ of
$\bZ_{67}^*$. The base blocks $X_1,X_2,X_3$ of the SDS are given by the formulas (\ref{orbite}) with $J,K,L$ given below. \\

\noindent $(67;31,28,28;37),  \quad  n = 50$  \\
\begin{eqnarray*}
J &=& \{0, 1,4,5,6,8, 10,16,18,23,27\}, \\
K &=& \{0, 1,2,3,4,5, 12,15,32,34\},  \\
L &=& \{0, 2,3,4,5,8, 10,18,23,30\}.
\end{eqnarray*}

\noindent $(67;30,30,27;37),  \quad  n = 50$  \\
\begin{eqnarray*}
J &=& \{8,12,15,16,17, 25,27,32,34,41\}, \\
K &=& \{1,3,9,12,15,   23,25,32,34,36\}, \\
L &=& \{1,4,6,8,10,    18,23,25,34\}.
\end{eqnarray*}

\noindent $(71;34,32,28;41),  \quad  n = 53$ \quad ? \\

\noindent $(71;31,31,30;39),  \quad  n = 53$ \\
\noindent $H=\{1,5,25,54,57\}$ is the subgroup of order 5 of the group $\bZ_{71}^*$. The base blocks $X_1,X_2,X_3$ of the SDS are given by the formulas (\ref{orbite}) with
\begin{eqnarray*}
J &=& \{0, 1,6,7,11,14,27\}, \\
K &=& \{0, 1,6,9,11,13,27\},  \\
L &=& \{1,2,6,13,14,42\}.
\end{eqnarray*}

\noindent $(79;39,37,31;48),  \quad  n = 59$ \\
\noindent Use the D-optimal SDS with parameters $(79;37,31;29)$ constructed in \cite{Djokovic:AustralasJC:1997}. \\

\noindent $(79;38,35,32;46),  \quad  n = 59$ \quad ? \\

\noindent $(79;37,34,33;45),  \quad  n = 59$ \\
\noindent $H=\{1,23,55\}$ is the subgroup of order 3 of the group $\bZ_{79}^*$. The base blocks $X_1,X_2,X_3$ of the SDS are given by the formulas (\ref{orbite}) with
\begin{eqnarray*}
J &=& \{0, 2,5,6,9,18,   20,22,27,30,33, 34,40\}, \\
K &=& \{0, 1,6,15,18,22, 24,30,33,34,41, 47\},  \\
L &=& \{1,2,3,4,5,    10,11,20,27,33, 41\}. \\
\end{eqnarray*}

\noindent $(83;39,37,34;48),  \quad  n = 62$ \quad ? \\

\noindent $(83;37,37,35;47),  \quad  n = 62$ \quad ? \\

\noindent $(103;51,48,42;64),  \quad  n = 77$ \\
\noindent Use the D-optimal SDSs with parameters
$(103;48,42;39)$ constructed in \cite{DK:JCD:2012}. \\

\noindent $(103;51,46,43;63),  \quad  n = 77$ \\
\noindent Use the D-optimal SDSs with parameters
$(103;46,43;38)$ constructed in \cite{DK:JCD:2012}. \\

In the following two cases we use the subgroup $H=\{1,46,56\}$ of
$\bZ_{103}^*$. The base blocks $X_1,X_2,X_3$ of the SDS are given by the formulas (\ref{orbite}) with $J,K,L$ given below. \\

\noindent $(103;49,49,42;63),  \quad  n = 77$
\begin{eqnarray*}
J &=& \{0, 1,3,4,6,7,10,12,15,17,20,23,29,38,40,42,44\}, \\
K &=& \{0, 2,4,5,6,7,10,12,14,17,19,21,31,40,49,53,55\},  \\
L &=& \{1,3,7,8,12,19,20,21,38,44,47,49,53,60\}.
\end{eqnarray*}

\noindent $(103;46,46,45;60),  \quad  n = 77$
\begin{eqnarray*}
J &=& \{0, 3,5,6,7,10,11,14,17,22,29,31,33,40,49,55\}, \\
K &=& \{0, 1,2,7,10,14,15,17,19,20,29,33,44,47,49,55\}, \\
L &=& \{1,4,5,8,10,12,15,20,22,29,33,49,51,53,62\}.
\end{eqnarray*}

\noindent $(107;49,48,46;63),  \quad  n = 80$ ? \\

\noindent $(127;61,58,54;78),  \quad  n = 95$ ? \\

\noindent $(127;60,57,56;77),  \quad  n = 95$ ? \\

\noindent $(127;57,57,57;76),  \quad  n = 95$ \\
\noindent An SDS with these parameters has been constructed in
\cite{Djokovic:OperMatr:2009}. \\

For $v=131$ there are three parameter sets in $\cP$ as
listed below. We have constructed an SDS for the first
and the third parameter set by using the subgroup
$H=\{1,53,58,61,89\}$ of $\bZ_{131}^*$.
The base blocks $X_1,X_2,X_3$ are given by the
formulas (\ref{orbite}) with $J,K,L$ specified below. \\

\noindent $(131;65,61,55;83),  \quad  n = 98$ \\
\begin{eqnarray*}
J &=& \{ 2,3,4,7,8, 9,11,18,19,33, 36,38,51 \}, \\
K &=& \{0, 1,2,6,7,8, 9,14,22,27,36, 42,44 \}, \\
L &=& \{ 1,2,14,17,21, 22,29,38,42,51, 79 \}.
\end{eqnarray*}

\noindent $(131;64,58,57;81),  \quad  n = 98$ ? \\

\noindent $(131;61,61,56;80),  \quad  n = 98$ \\
\begin{eqnarray*}
J &=& \{ 0, 2,8,11,12,14, 18,19,23,27,29, 33,79 \}, \\
K &=& \{ 0, 2,3,4,8,9, 14,17,27,33,42, 44,79 \}, \\
L &=& \{ 0, 2,4,7,8,14, 17,19,21,27,29, 79 \}.
\end{eqnarray*}


\begin{thebibliography}{99}

\bibitem{Baumert:1971} L. D. Baumert, Cyclic difference sets.
Lecture Notes in Mathematics, Vol. 182 Springer-Verlag, Berlin-New York 1971.

\bibitem{Djokovic:AustralasJC:1997}
D. {\v{Z}}. {\Dbar}okovi{\'c},
Some new D-optimal designs.
Australas. J Combin. 15 (1997), 221–-231.

\bibitem{Djokovic:skewHM:188:388:2008}
D. {\v{Z}}. {\Dbar}okovi{\'c},
Skew-Hadamard matrices of orders 188 and 388 exist.
International Mathematical Forum, 22 (2008), 1063--1068

\bibitem{Djokovic:JCD:2008}
D. {\v{Z}}. {\Dbar}okovi{\'c},
Skew-Hadamard matrices of orders 436, 580, and 988 exist,
J. Combin. Designs, 16 (2008), 493--498.

\bibitem{Djokovic:OperMatr:2009}
D. {\v{Z}}. {\Dbar}okovi{\'c},
Supplementary difference sets with symmetry for
Hadamard matrices.
Operators and Matrices, 3 (2009), 557--569.

\bibitem{DGK:JCD:2014}
D. {\v{Z}}. {\Dbar}okovi{\'c}, O. Golubitsky and
I. S. Kotsireas,
Some new orders of Hadamard and skew-Hadamard matrices.
J. Combin. Designs, 22 (2014), 270--277.

\bibitem{DK:JCD:2012}
D. {\v{Z}}. {\Dbar}okovi{\'c} and I. S. Kotsireas,
New results on D-optimal matrices.
J. Combin. Designs, 20 (2012), 278--289.

\bibitem{DK:D-optimal:2015}
D. {\v{Z}}. {\Dbar}okovi{\'c} and I. S. Kotsireas,
D-optimal matrices of orders 118, 138, 150, 154 and 174.
In: C. J. Colbourn (ed.) Algebraic Design Theory and Hadamard Matrices,  pp. 71--82, ADTHM, Lethbridge, Alberta, Canada,
July 2014.
Springer Proceedings in Mathematics \& Statistics, vol. 133.
Springer 2015.

\bibitem{Duke}
W. Duke,
Some old problems and new results about quadratic forms,
Not. Amer. Math. Soc. 44 (1997), 190--196.

\bibitem{FKS:2004}
R. J. Fletcher, C. Koukouvinos and J. Seberry,
New skew-Hadamard matrices of order 4.59 and new D-optimal designs of order 2.59,
Discrete Math.  286  (2004), 251--253.

\bibitem{Handbook:DifferenceSets}
D. Jungnickel, A. Pott, K. W. Smith, Difference sets, in Handbook of combinatorial designs. Edited by C. J. Colbourn and J. H. Dinitz. Second edition. Discrete Mathematics and its Applications (Boca Raton). Chapman \& Hall/CRC, Boca Raton, FL, 2007.

\bibitem{KouStyl:2008}
C. Koukouvinos, S. Stylianou,
On skew-Hadamard matrices,
Discrete Math. 308 (2008), 2723--2731.

\bibitem{SY:1992} J. Seberry, M. Yamada, Hadamard matrices,
sequences, and block designs. In Contemporary design theory,
431–-560, Wiley-Intersci. Ser. Discrete Math. Optim.,
Wiley, New York, 1992,

\bibitem{Stinson:book:2004}
D. R. Stinson, Combinatorial designs. Constructions and analysis. Springer-Verlag, New York, 2004.

\bibitem{vanLint:Wilson:1992}
J. H. van Lint, R. M. Wilson, A course in combinatorics. Cambridge University Press, Cambridge, 1992.

\end{thebibliography}
\end{document}